\documentclass[12pt]{article}  

\setlength{\textwidth}{16cm}
\setlength{\textheight}{21cm}
\hoffset-1cm

\newcommand{\al}{\alpha}
\newcommand{\var}{\varphi}

\newcommand{\ti}{\times}

\newcommand{\del}{\delta}
\newcommand{\pa}{\partial}
\newcommand{\f}{\frac}

\newcommand{\subs}{\subseteq}

\newcommand{\rig}{\rightarrow}
\newcommand{\lo}{\longrightarrow}

\input{epsf}

    \setcounter{page}{1}
    \usepackage{graphicx}
    \usepackage{amsmath}
    \usepackage{amsthm}
    \usepackage{amssymb}
    \usepackage{latexsym}

\newtheorem{theorem}{Theorem}[section]
\newtheorem{lemma}[theorem]{Lemma}

\theoremstyle{definition}

\newtheorem{example}[theorem]{Example}
\newtheorem{remark}[theorem]{Remark}

\title{Generalizations of the Hilbert-Weierstrass Theorem
and Tonelli-Morrey Theorem: the Regularity of Solutions of Differential Equations and Optimal Control Problems}
\author{Saman Khoramian\thanks{E-mail address: saman.khoramian@gmail.com}}
\date{ }

\begin{document}

\maketitle

\begin{abstract}
\noindent
One of the basic problems in the “Calculus of Variations” is minimizing the following integral function:\\ 
                  $$F(x)=\int_a^b f(t,x(t),x'(t)) dt$$
over a class of functions $x$ defined on the interval $[a,b]$, and which take prescribed values at $a$ and $b$. Solutions of this basic problem under a regularity theorem, lie in a smaller class of more regular functions, however, they were initially considered to lie merely in a larger class. Two Theorems attributed to “Hilbert-Weierstrass” and “Tonelli-Morrey” respectively are two classical studies for the regularity discussion around the solutions of this problem. Now, since differential equations and optimal control problems with higher-order have been growing in the literature, addressing the regularity issues for these problems should be paid more attention. In this regard, here, a generalization for the regularity theorems will be presented; namely, the regularity of the solution of the following integral functional \\
                             $$F(x)=\int_a^b f(t,x(t),x'(t),\dots,x^{(n-1)}(t)) dt$$
where $n \geq 2$. It is desired that these theorems will be useful for researchers to prove the regularity properties of differential equations or optimal control problems.  
\end{abstract}

\noindent
{\bf Keywords:} Boundary value problems, Classical solution, Regularity, Weak 
solution, Optimal control problems. 
\section{Introduction} 
“Differential equations”, compared with other mathematical tools, have a more paramount role in explaining how the physical world functions. Systems of ordinary differential equations of the form\\ 
\begin{equation}
F(x,y,y',\ldots ,y^{(n-1)})=y^{(n)}\tag{$1-1$}
\end{equation}
are routinely used today to model a wide range of phenomena, in areas as diverse as aeronautics, power generation, robotics, economic growth, and natural resources. For solving $(1-1)$, an $y \in C^{n}$ should be found that comes true in the equation. In some special cases, there exist direct methods for achieving the exact solution (such as Bernoulli equations and so on), which are usually addressed in elementary differential equations books (see [16], [17]; for example). However, in real-world applications, most differential equations have more complicated forms; consequently, in order to approach this kind of problems many different numerical methods for approximating the solutions exist. Alongside that, there exist theoretical approaches that address the existence and number of solutions as well as analysis around the properties of the solutions for differential equations. These theoretical efforts light the way for the numerical tasks. \\
The approach of theoretical works around the existence of solutions mostly is followed through investigating the solution in a larger space than $C^n$; namely, $W^{{n-1},{2}}$ that is the space of all $n-1$ weakly derivative functions in $L^2$. In fact, the problem of finding a solution in the space $C^n$ is replaced with the problem of finding a weak solution in the $W^{{n-1},{2}}$ that is reflexive and a much larger space than $C^n$; consequently more achievable for investigating the problem of existence of solutions in the point of utilizing mathematical analysis theorems. After the proof of the existence of a weak solution in $W^{{n-1},{2}}$, what remains is to prove that the weak solution belongs to $C^n$; this is named the regularity of the weak solution. Also, note that proving the weak solution in $W^{{n-1},{2}}$ which belongs to $C^{n-1}$ is named regularity of the classical solution.\\ 
From the beginning of differential equation theory, differential equations of order $2$ have been more important. In other words, the Mathematical interpretation of most applied problems in physics and engineering are the quadratic differential equations. Therefore, the regularity of solutions to these kinds of differential equations has been at the center of attention in the literature.  Among these efforts, Theorems 7.1.13 and 7.1.14 in [7] could be mentioned that are for the regularity of the classical solution and the regularity of the weak solution of a category of quadratic ordinary differential equations.  Theorem 7.1.13 is a classical result of Hilbert and Weierstrass around 1875 (in the lecture notes of Weierstrass that were circulating then) and has appeared in countless books in the calculus of variations since then. For instance, you can find another form of it in [4] (see Theorem 15.7). Goldstein’s 1980 book “a history of the calculus of variations,” says something about it [9]. Theorem 7.1.14 is a version of the so-called “Tonelli-Morrey” approach to regularity in the literature. It dates back to Tonelli’s pioneering book of 1921 “Fondamenti del calcolo delle variazioni” [18] and Morrey’s book of 1966 [14]. A relevant discussion appears in chapter 16 of [4]: To recover Theorem 7.1.14, one uses Theorem 16.13 to get Lipschitz regularity first (see also the remark at the bottom of page 329), then theorem 15.5 to get $C^{1}$, then theorem 15.7 for the higher regularity. \\  
The approach in these theorems is considering {\it energy functional} related to the concerning differential equation and applying the fact that the extreme of the energy functional is a weak solution for the differential equation and vice versa. In fact, it has been shown that if $x$ is an extreme point of the energy functional, then $x \in C^{2}$. \\
Since using the differential equations of the order of larger than $2$ have been increasing by various interpretations in different practical problems (see [10, 12, 13, 20], for example), theoretical discussions around these kinds of differential equations should be addressed to a greater extent. In this paper, we present the general form of the ordinary differential equations of order $n \geq 2$ and prove the regularity of the weak solutions by assuming the existence of some solutions. Indeed, the approach that has been provided in Theorem 7.1.13 and Theorem 7.1.14 in [7] will be generalized. As an explanation, suppose the general form of the boundary value problems for the $n$-th order ordinary differential equations can be expressed as the following form:
\begin{equation*}
\begin{cases}
G(t,x(t),x'(t),\dots,x^{(n)}(t))=0, & t\in (a,b),\\
x^{(i)}(a)=u_i & \text{for} \quad i\in N,\\
x^{(j)}(b)=w_j & \text{for} \quad j\in N',
\end{cases}\tag{$1-2$}
\end{equation*}
where $x^{(i)}$ is the $i$-th derivative of the function $x$, $u_i,w_j\in \mathbb{R}$ for 
$i\in N, j\in N'$, $n\geq 2$ and $N,N'\subseteq \{0,1,\dots,n-1\}$. We are interested in proving regularity results for solutions of the problem $(1-2)$.  The existence of solutions and some regularity results for $n=1$ have been discussed by mathematicians in advance. As already mentioned, since applied interpretations for ODE’s with degrees more than $2$ have come to the fore, papers for investigating the problem of the existence of solutions of these equations are increasing. Therefore, in parallel to these papers, for regularity results of them, some efforts should be made. In fact, what here is done is providing the regularity results for the solution of the problem $(1-2)$. To this aim, we consider this fact that any critical point of the following energy functional $F$ is a weak 
solution of $(1-2)$ and vice versa:
\begin{equation}
F(x)=\int_a^b f(t,x(t),x'(t),\dots,x^{(n-1)}(t)) dt,\quad x\in {\mathcal{N} }_{N,N'} \tag{$1-3$}
\end{equation}
where ${\mathcal{N}}_{N,N'}=\{u\in W^{n-1,2}(a,b): u^{(i)}(a)=u_i, u^{(j)}(b)=w_j 
\;\; \text{where}\;\; i\in N, j\in N'\}$
and $f=f(x_1,\dots,x_{n+1})$ is a function defined on $[a,b]\ti \mathbb{R}^n$ with 
continuous second partial derivatives with respect to all its variables.

Our approach for regularity in this paper is to show that, if $u_0$ 
is a critical point of $F$, then $u_0\in C^n[a,b]$. This work has been done for 
$n=2$ and $N,N'=\{0\}$ (see Theorem 7.1.13 and 7.1.14 from [7]). Here, we 
prove general case; $n\geq 2$ and $N,N'\subseteq \{0,1,\dots,n-1\}$.\\
In Section 2, the “regularity of the classical solutions” for the function $F$ in $(1-3)$ is presented. In Section 3, it will be shown that if $u$ is a local extremum of $F$ in $(1-3)$ with respect to $W^{{n-1},{2}}$, then $u$ is in $C^n$.

\section{Regularity of the classical solution} 
To be able to infer that the solution of the notation map of $(1-2)$ not only is of the class $C^{n-1}$ but also is in $C^{n}$ of initial data, we must place some requirements. We begin with the following Lemma:  
\begin{lemma}
Suppose $f$ is a function from $\mathbb{R}^{n+1}$ to $\mathbb{R}$ and its partial derivatives exist. Then we will have
\begin{align*}
\lim_{r \to 0} &\f {f(x_0, x_1+rm_1, x_2+rm_2, \dots, x_n+rm_n) - f(x_0, x_1, x_2, \dots, x_n)}{r}\\
&= \sum_{i=1}^{n} m_{i}\f{\pa f}{\pa x_i} (x_0, x_1,\dots, x_n).
\end{align*}
\end{lemma}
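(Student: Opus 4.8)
The plan is to reduce the multivariable directional limit to $n$ one-variable statements through a telescoping decomposition, apply the one-dimensional mean value theorem to each piece, and then pass to the limit. Fix the base point $p=(x_0,x_1,\dots,x_n)$ and the direction $(m_1,\dots,m_n)$, and for a real parameter $r$ set
\begin{align*}
p_i(r)=(x_0,x_1+rm_1,\dots,x_i+rm_i,x_{i+1},\dots,x_n),
\end{align*}
so that $p_0(r)=p$ while $p_n(r)$ is exactly the perturbed point in the numerator of the limit. Then one has the telescoping identity
\begin{align*}
f(p_n(r))-f(p_0(r))=\sum_{i=1}^{n}\bigl(f(p_i(r))-f(p_{i-1}(r))\bigr).
\end{align*}

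For each fixed $i$ and each small $r\neq 0$, the points $p_i(r)$ and $p_{i-1}(r)$ agree in every coordinate except the $i$-th, where the entry moves from $x_i$ to $x_i+rm_i$. Because $\pa f/\pa x_i$ is assumed to exist at every point, the one-variable function $t\mapsto f(x_0,x_1+rm_1,\dots,x_{i-1}+rm_{i-1},t,x_{i+1},\dots,x_n)$ is differentiable, hence continuous, on the segment joining $x_i$ and $x_i+rm_i$; the mean value theorem then supplies a point $\xi_i=\xi_i(r)$ between $x_i$ and $x_i+rm_i$ with
\begin{align*}
f(p_i(r))-f(p_{i-1}(r))=rm_i\,\f{\pa f}{\pa x_i}\bigl(x_0,x_1+rm_1,\dots,x_{i-1}+rm_{i-1},\xi_i,x_{i+1},\dots,x_n\bigr).
\end{align*}
Dividing the telescoping identity by $r$ therefore expresses the difference quotient as $\sum_{i=1}^{n} m_i\,\pa f/\pa x_i$ evaluated at these intermediate arguments.

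It remains to let $r\to 0$. Each perturbed coordinate $x_j+rm_j$ tends to $x_j$, and each $\xi_i(r)$, being trapped between $x_i$ and $x_i+rm_i$, also tends to $x_i$; hence the argument of every $\pa f/\pa x_i$ above converges to $p$. Passing the limit inside then gives that each summand tends to $m_i\,\pa f/\pa x_i(p)$, and summing yields the asserted formula. The one point I would treat with care — and, strictly, the only real obstacle — is precisely this final interchange of limit and evaluation: the mere existence of the partial derivatives does not by itself license it (indeed the bald statement fails for functions whose partials exist but are discontinuous at $p$), so the argument must lean on continuity of the $\pa f/\pa x_i$. This is harmless in the context of the paper, where $f$ is assumed to have continuous second — and a fortiori first — partial derivatives, and it would be worth recording that hypothesis explicitly in the statement.
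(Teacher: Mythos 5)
Your proof is correct and follows essentially the same route as the paper's: the identical telescoping decomposition into single-coordinate increments, with your mean value theorem step playing the role of (and in effect proving) the ``Fact'' that the paper states without proof. Your closing caveat is also on target: mere existence of the partial derivatives does not justify the final passage to the limit (for $n=2$ the standard example $f(x_0,x_1,x_2)=x_1x_2/(x_1^2+x_2^2)$, $f(x_0,0,0)=0$, shows the statement as literally written fails), so continuity of the first partials --- available in the paper, where $f$ is assumed to have continuous second partials --- is genuinely needed, and the paper's unproven Fact implicitly relies on it as well.
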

\begin{proof} 
First, note the following equality 
\begin{align*}
&f(x_0, x_1+rm_1, x_2+rm_2, \dots, x_n+rm_n) - f(x_0, x_1, x_2, \dots, x_n)\\
=&f(x_0, x_1+rm_1, x_2, \dots, x_n) - f(x_0, x_1, x_2, \dots, x_n)\\
+&\sum_{i=2}^{n} [f(x_0, x_1+rm_1, \dots, x_i+rm_i, x_{i+1}, x_{i+2}, \dots, x_n)\\
&~~~~~- f(x_0, x_1+rm_1, \dots, x_{i-1}+rm_{i-1}, x_i, x_{i+1}, \dots, x_n)].
\end{align*}
Then, by this the proof comes from the following fact:\\
\\ 
Fact: {\textit {Suppose $j \in \lbrace 1, \dots, n\rbrace$ and $m_i, t_i \in \mathbb{R};~1\leq i \leq n$.
If $m_i = t_i$ for $i \neq j$ and $m_j \neq t_j = 0$, then
\begin{align*}
\lim_{r \to 0} &\f {f(x_0, x_1+rm_1, \dots, x_n+rm_n) - f(x_0, x_1+rt_1, \dots, x_n+rt_n)}{r}\\
&= m_{i}\f{\pa f}{\pa x_i} (x_0, x_1,\dots, x_n).
\end{align*}}} 
\end{proof}
Moreover, the following lemma, which is a simple result of the Implicit Function Theorem [15], is needed: 
\begin{lemma}
If $\var:=\var(s,t)$ is a function from $[a,b]\ti \mathbb{R}$ to $\mathbb{R}$ 
such that 

(i) $\var(t_0,s_0)=0$.

(ii) $\f{\pa \var}{\pa s}(t_0,s_0)\neq 0$.

(iii) $\var,\f{\pa\var}{\pa s}$ are continuous in $t_0$.\\
Then,
$$\exists \del_1,\hat{\del}; \forall t\in (t_0-\del_1,t_0+\del_1) \exists ! 
z(t) \in (s_0-\hat{\del}, s_0+\hat{\del}); \var(t,z(t))=0,$$
the function $t\lo z(t)$ is continuous.
\end{lemma}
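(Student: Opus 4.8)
The plan is to deduce the statement from the one-dimensional intermediate value theorem together with a strict-monotonicity argument, which uses no more regularity than (i)--(iii) provide; the full Implicit Function Theorem is convenient but not essential here. First I would reduce to the case $\f{\pa \var}{\pa s}(t_0,s_0)>0$, since replacing $\var$ by $-\var$ changes neither the hypotheses nor the zero set. By the continuity of $\f{\pa \var}{\pa s}$ near $(t_0,s_0)$ supplied by (iii), I would choose a closed rectangle $[t_0-\del_1,t_0+\del_1]\ti[s_0-\hat{\del},s_0+\hat{\del}]$ on which $\f{\pa \var}{\pa s}>0$. Then for each fixed $t$ in this $t$-interval the map $s\mapsto \var(t,s)$ is strictly increasing on $[s_0-\hat{\del},s_0+\hat{\del}]$, which already gives the uniqueness clause of the conclusion once existence is proved.

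Next I would pin down a sign change. Since $\var(t_0,s_0)=0$ and $\var(t_0,\cdot)$ is strictly increasing on the slice, $\var(t_0,s_0-\hat{\del})<0<\var(t_0,s_0+\hat{\del})$. Using the continuity of $\var$ at $t_0$ for the two fixed second arguments $s_0\pm\hat{\del}$, I would shrink $\del_1$ so that
$$\var(t,s_0-\hat{\del})<0<\var(t,s_0+\hat{\del})\qquad\text{for every } t\in(t_0-\del_1,t_0+\del_1).$$
For each such $t$ the function $s\mapsto \var(t,s)$ is continuous on $[s_0-\hat{\del},s_0+\hat{\del}]$ (differentiability in $s$ forces continuity in $s$), so by the intermediate value theorem it vanishes at some interior point, unique by the strict monotonicity above; this is the required $z(t)$, and $\var(t,z(t))=0$ on $(t_0-\del_1,t_0+\del_1)$.

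For the continuity of $t\mapsto z(t)$ I would run the same construction localized at an arbitrary $t_1\in(t_0-\del_1,t_0+\del_1)$: given $\eps>0$ with $[z(t_1)-\eps,z(t_1)+\eps]\subs(s_0-\hat{\del},s_0+\hat{\del})$, strict monotonicity in $s$ gives $\var(t_1,z(t_1)-\eps)<0<\var(t_1,z(t_1)+\eps)$, continuity of $\var$ in $t$ at $t_1$ makes these inequalities persist for all $t$ with $|t-t_1|<\del$, and the uniqueness of the zero on the slice $t$ then forces $z(t)\in(z(t_1)-\eps,z(t_1)+\eps)$, i.e. $|z(t)-z(t_1)|<\eps$. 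The one point needing care is the (somewhat loosely phrased) hypothesis (iii): the argument actually uses that $\f{\pa \var}{\pa s}$ is positive on a genuine two-dimensional neighbourhood of $(t_0,s_0)$ and that $\var(\cdot,s)$ is continuous at $t_0$ and at nearby base points for the particular fixed values of $s$ occurring above, so the write-up should begin by recording that (i)--(iii) do guarantee exactly this. If instead (iii) is strengthened to $\var\in C^1$ near $(t_0,s_0)$, then the conclusion -- indeed the $C^1$-dependence of $z$ on $t$ -- follows at once from the Implicit Function Theorem of [6], as the lemma's preamble suggests; the elementary route above is preferable precisely because it dispenses with that extra smoothness.
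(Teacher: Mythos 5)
Your proposal is correct, but it takes a genuinely different route from the paper: the paper gives no proof of this lemma at all, presenting it as an immediate consequence of the Implicit Function Theorem of [6] (and, in the proof of Lemma 3.2, re-quoting it as the ``second form'' of that theorem from Remark 4.2.3 of [1]). Your argument -- reduce to $\f{\pa\var}{\pa s}(t_0,s_0)>0$, use continuity of $\f{\pa\var}{\pa s}$ to fix a rectangle $[t_0-\del_1,t_0+\del_1]\ti[s_0-\hat{\del},s_0+\hat{\del}]$ where it is positive, get uniqueness from strict monotonicity in $s$, existence from the intermediate value theorem after forcing $\var(t,s_0-\hat{\del})<0<\var(t,s_0+\hat{\del})$, and continuity of $z$ by repeating the sign-change argument at each $t_1$ -- is the standard elementary (Dini-type) proof and is sound. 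What it buys is that it is self-contained and uses only what the hypotheses literally provide: continuity of $\var$ and of $\f{\pa\var}{\pa s}$, with no differentiability in $t$, whereas invoking the classical Implicit Function Theorem in the form of [6] strictly speaking requires $\var$ to be continuously differentiable in both variables. What the citation route buys in exchange is differentiability of $z$; this matters because the paper later asserts $z\in C^1(t_0-\del_1,t_0+\del_1)$ in the proof of Theorem 2.5, a conclusion that neither the lemma as stated nor your elementary proof delivers. Your closing caveat about hypothesis (iii) is well taken: as phrased (``continuous in $t_0$'') it only guarantees behaviour at $t_0$, while both your continuity-of-$z$ argument at nearby points $t_1$ and the paper's own use of the lemma need continuity of $\var$ and $\f{\pa\var}{\pa s}$ on a full neighbourhood of $(t_0,s_0)$; recording that reading explicitly at the start of the write-up, as you propose, closes the only loose end.
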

We recall the Fundamental Lemma in Calculus of Variation by du Bois-Reymond [8](see Lemma 7.1.9 in [7]): 
\begin{lemma}
Let $\mathcal{I}$
be an open interval and $f\in L^1_{loc}(\mathcal{I})$. If 
$$\int_{\mathcal{I}} f(x) \var'(x)dx=0 \quad \text{for any} \quad \var\in 
C_0^\infty({\mathcal{I}}),$$
then $f=const$. a.e. in ${\mathcal{I}}$.
\end{lemma}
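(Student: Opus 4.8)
The plan is to reduce the statement to the basic form of the fundamental lemma — that an $L^1_{loc}$ function which pairs to zero against every $\var\in C_0^\infty$ must vanish a.e. — by enlarging the class of admissible functions from derivatives of test functions to all test functions. The device is to subtract from an arbitrary test function a suitable multiple of a fixed normalized bump, leaving a function of zero average, which is then the derivative of a compactly supported smooth primitive.

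Concretely, first I would fix once and for all a function $\psi_0\in C_0^\infty(\cI)$ with $\int_{\cI}\psi_0\,dx=1$; such a $\psi_0$ exists because $\cI$ is a nonempty open interval. Put $\bar c:=\int_{\cI} f\psi_0\,dx$, which is a well-defined real number since $f\in L^1_{loc}(\cI)$ and $\psi_0$ has compact support in $\cI$. The goal then becomes to show $f=\bar c$ a.e.\ on $\cI$.

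Next, let $\psi\in C_0^\infty(\cI)$ be arbitrary, set $g:=\psi-\big(\int_{\cI}\psi\,dx\big)\psi_0\in C_0^\infty(\cI)$, extend $g$ by $0$ to $\BR$, and let $\var(x):=\int_{-\infty}^{x} g(t)\,dt$. The key point is that $\var\in C_0^\infty(\cI)$: it is smooth because $g$ is; it vanishes to the left of the (common, compact) support $K\subs\cI$ of $\psi$ and $\psi_0$; and it vanishes to the right of $K$ because $\int_{\BR} g=\int_{\cI}\psi-\big(\int_{\cI}\psi\big)\int_{\cI}\psi_0=0$; hence $\supp\var\subs[\inf K,\sup K]\subs\cI$. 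Since $\var$ is admissible and $\var'=g$, the hypothesis yields
$$0=\int_{\cI} f\var'\,dx=\int_{\cI} f\psi\,dx-\Big(\int_{\cI}\psi\,dx\Big)\int_{\cI} f\psi_0\,dx=\int_{\cI}(f-\bar c)\psi\,dx.$$
As $\psi$ was arbitrary, $\int_{\cI}(f-\bar c)\psi\,dx=0$ for every $\psi\in C_0^\infty(\cI)$.

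Finally I would invoke the basic form of the fundamental lemma — the assertion that an $h\in L^1_{loc}(\cI)$ with $\int_{\cI} h\var\,dx=0$ for all $\var\in C_0^\infty(\cI)$ vanishes a.e., which itself follows by mollifying $h$ at a Lebesgue point and applying the Lebesgue differentiation theorem — applied to $h:=f-\bar c$, obtaining $f=\bar c$ a.e.\ in $\cI$, as claimed. The step I expect to require the most care is the verification that $\var$ genuinely lies in $C_0^\infty(\cI)$, since this is precisely where the normalization $\int\psi_0=1$ and the resulting zero average of $g$ enter; the remaining manipulations are routine, and the concluding vanishing lemma is standard.
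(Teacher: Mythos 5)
Your proof is correct: the reduction of an arbitrary test function $\psi$ to the zero-mean function $\psi-\bigl(\int_{\cI}\psi\bigr)\psi_0$, whose primitive is a legitimate element of $C_0^\infty(\cI)$, does yield $\int_{\cI}(f-\bar c)\psi\,dx=0$ for all $\psi$, and the vanishing form of the fundamental lemma (mollification at Lebesgue points) then gives $f=\bar c$ a.e. Note, however, that the paper does not prove this statement at all: it is recalled as the classical du Bois-Reymond lemma and cited to the literature (reference [7]), so there is no proof in the paper to compare against; what you have written is precisely the standard textbook argument for it, with the one delicate point --- that the primitive $\var$ is compactly supported inside $\cI$ because $g$ has zero integral and $[\inf K,\sup K]\subs\cI$ --- handled correctly.
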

We have provided a generalization for it in the following lemma: 
\begin{lemma}
Let ${\mathcal{M}}=\{u\in C^n[a,b]: u^{(i)}(a)=u_i, u^{(i)}(b)=w_i;~ 0\leq i\leq n\}$
and $f\in L^1_{loc}(a,b)$. If
$$\int_a^b f(t) V^{(n)}(t) dt=c \quad \text{for any}\quad V\in{\mathcal{M}}$$
then $f$ is a polynomial of degree $n$ almost everywhere in $[a,b]$ .i.e. there are $c_0,c_1, \dots,c_n \in \mathbb{R}$ such that:
$$f(t)=c_nt^n+\dots+c_1t+c_0\quad \text{ a.e.\; in} \quad [a,b].$$
\end{lemma}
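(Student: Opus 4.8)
The plan is to reduce the statement to the classical du Bois–Reymond lemma (Lemma 2.3) by an iterated integration-by-parts argument. The key observation is that the space $\cM$ is large enough that, for a generic test function $\var\in C_0^\infty(a,b)$, one can build an element $V\in\cM$ whose $n$-th derivative is $\var$ plus a correction that is a polynomial of degree $<n$. Concretely, I would first normalize: by subtracting from $f$ a fixed polynomial $p$ of degree $n$ chosen so that $\int_a^b (f-p)\,V_0^{(n)}=0$ for one convenient $V_0\in\cM$ with $\int_a^b V_0^{(n)}\neq 0$ (for instance, arrange $V_0^{(n)}\equiv 1$ by taking $V_0(t)=t^n/n!$ adjusted by a degree-$(n-1)$ polynomial to meet the boundary data — note the boundary conditions on $V$ and its derivatives up to order $n$ at both endpoints can always be met since we have $2(n+1)$ conditions and polynomials of degree $\le 2n+1$ form a $(2n+2)$-dimensional space). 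After this reduction we may assume $c=0$, i.e. $\int_a^b f\,V^{(n)}=0$ for all $V\in\cM$.

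Next I would show that for any $\var\in C_0^\infty(a,b)$ there exists $V\in\cM$ with $V^{(n)}=\var$. Indeed, define $V$ by integrating $\var$ exactly $n$ times, $V(t)=\int_a^t\!\!\int_a^{s_1}\!\!\cdots\int_a^{s_{n-1}}\var(s_n)\,ds_n\cdots ds_1$. Since $\var$ has compact support in $(a,b)$, this $V$ is $C^\infty$, vanishes together with all its derivatives at $t=a$, and at $t=b$ we get $V^{(i)}(b)=\frac{1}{(n-1-i)!}\int_a^b (b-s)^{n-1-i}\var(s)\,ds$, a fixed real number for each $i$. So $V$ lies in $\cM$ only if these happen to be the prescribed boundary values — which in general they are not. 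The fix: the moment-type functionals $\var\mapsto V^{(i)}(b)$, $0\le i\le n$, together with $\var\mapsto\int_a^b\var$, are $n+2$ linear functionals on $C_0^\infty(a,b)$; choose $n+2$ fixed bump functions $\psi_0,\dots,\psi_{n+1}$ on which the corresponding matrix is invertible (possible because these functionals are linearly independent). Then for arbitrary $\var$ one subtracts a suitable linear combination $\sum_k \lambda_k\psi_k$ so that the resulting $\tilde\var=\var-\sum\lambda_k\psi_k$ satisfies $\int_a^b\tilde\var=0$ and produces the correct boundary data, hence its $n$-fold antiderivative lies in $\cM$.

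Feeding such $V$ into the hypothesis gives $\int_a^b f\cdot\tilde\var=0$ for all $\var\in C_0^\infty(a,b)$, where $\tilde\var=\var-\sum_{k=0}^{n+1}\lambda_k(\var)\psi_k$ and each $\lambda_k$ is itself an integral functional of $\var$ against a polynomial weight; writing this out and using Lemma 2.3's style of reasoning (or directly the statement that $\int g\var=0$ for all $\var\in C_0^\infty$ forces $g=0$ a.e.), one deduces that $f$ agrees a.e. with a function of the form $q(t)=\sum_{k}\mu_k\,(\text{polynomial of degree}\le n)$, hence $f$ is a.e. equal to a polynomial of degree at most $n$. The main obstacle I anticipate is the bookkeeping in this last step: one must verify carefully that the correction terms $\sum\lambda_k(\var)\psi_k$, when transferred onto $f$ via the pairing, only contribute polynomial-in-$t$ weights of degree $\le n$ (they do, because $\lambda_k(\var)$ is $\int\var$ against a polynomial of degree $\le n-1$ coming from the iterated-integral kernel $(b-s)^{n-1-i}$, plus the constant from the $\int\var=0$ constraint), so that the conclusion "$f$ is a polynomial of degree $\le n$" comes out cleanly rather than with spurious higher-degree terms. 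Everything else — the antiderivative construction, the boundary-value interpolation by polynomials, the appeal to the du Bois–Reymond lemma — is routine.
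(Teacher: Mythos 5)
Your construction of admissible variations does not actually produce elements of $\cM$, and this is where the argument breaks. Recall that $\cM$ prescribes the values of $u^{(i)}$ at \emph{both} endpoints for every $0\le i\le n$. If $V$ is the $n$-fold antiderivative (from $a$) of a function $\tilde\var$ with compact support in $(a,b)$, then $V^{(i)}(a)=0$ for all $i$ and $V^{(n)}(b)=\tilde\var(b)=0$; but membership in $\cM$ requires $V^{(i)}(a)=u_i$ and $V^{(n)}(b)=w_n$, which are in general nonzero. Your bump-function correction only adjusts the values $V^{(i)}(b)$ for $i\le n-1$ and cannot repair the conditions at $a$, nor the condition on $V^{(n)}$ at $b$; moreover, among your $n+2$ functionals, $\var\mapsto V^{(n)}(b)$ is identically zero on $C_0^\infty(a,b)$ and $\var\mapsto V^{(n-1)}(b)$ coincides with $\var\mapsto\int_a^b\var$, so the matrix you want to invert cannot be made invertible. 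The preliminary normalization is also unjustified: for $\deg p=n$ the quantity $\int_a^b p\,V^{(n)}\,dt$ is \emph{not} the same for all $V\in\cM$ (integration by parts leaves the term $(-1)^n p^{(n)}\int_a^b V\,dt$, which varies with $V$), and in any case arranging $\int_a^b(f-p)V_0^{(n)}\,dt=0$ for a single $V_0$ does not give $c=0$ for all $V\in\cM$.

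The missing idea --- and the paper's route --- is to use that $\cM$ is an affine translate of the homogeneous space $\cM'=\{u\in C^n[a,b]:u^{(i)}(a)=u^{(i)}(b)=0,\ 0\le i\le n\}$: one explicit $V_*\in\cM$ (the paper builds it iteratively; a Hermite interpolation polynomial of degree $2n+1$ would also do) gives $\cM=V_*+\cM'$, so the hypothesis becomes $\int_a^b f\,V^{(n)}\,dt=c'$ for all $V\in\cM'$ with a new constant $c'$; since $\cM'$ is invariant under scaling, $c'=0$. In particular $\int_a^b f\,\var^{(n)}\,dt=0$ for all $\var\in C_0^\infty(a,b)\subseteq\cM'$, and the higher-order du Bois-Reymond argument (iterating Lemma 2.3, as the paper indicates) yields that $f$ agrees a.e.\ with a polynomial of degree at most $n-1$, hence of the stated form. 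Your final bookkeeping step (that the corrections contribute only polynomial weights of degree $\le n$) is fine in spirit, but the variations it is fed are not admissible, so as written the proof does not go through.
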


\begin{proof}
First, we define ${\mathcal{M}}'$ as follows:
$${\mathcal{M}}'=\{u\in C^n[a,b]: u^{(i)}(a)=u^{(i)}(b)=0;~ 0\leq i\leq n\}.$$
Moreover, we define $V_0$ and $G(\cdot;A,B)$ as follows:
\begin{align*}
V_0(t) & =\f{w_0-u_0}{b-a} (t-a) +u_0 , \\
G(t;A,B) &=\f{B-A}{b-a} (t-a)+A.
\end{align*}
Then, by defining $V_n$ iteratively as follows, 
$$V_n(t) =V_{n-1}(t)+ G_n(t) (V_0(t)-u_0)^n (V_0(t)-w_0)^n$$
where $G_n(t)=G(t;A_n,B_n)$ such that:
\begin{align*}
A_n &=(-1)^n \f{(b-a)^n}{n!(w_0-u_0)^{2n}} (u_n-V_{n-1}^{(n)}(a)) \\
B_n &= \f{(b-a)^n}{n!(w_0-u_0)^{2n}} (w_n-V_{n-1}^{(n)}(b)).
\end{align*}
we will have $V_n+{\mathcal{M}}'={\mathcal{M}}$. Therefore,
$$\forall V\in {\mathcal{M}}'~~\int_a^b f(t) [V^{(n)}(t)+V_n^{(n)}(t)] dt=c.$$
Then,
$$\forall V\in{\mathcal{M}}'~~\int_a^b f(t) V^{(n)}(t)dt=c-\int_a^b f(t) V_n^{(n)} 
(t)dt:=c'.$$
Since $\al{\mathcal{M}}'={\mathcal{M}}'$ for $\al\neq 0$, we will have
\begin{align*}
\forall & V \in {\mathcal{M}}'~~ \int_a^b f(t) V^{(n)}(t)dt=\f{c'}{2} 
\end{align*}
Therefore $\f{c'}{2}=c'$ i.e. $c'=0$. We get, by integrating by parts iteratively, 
\begin{align*}
\forall & V \in {\mathcal{M}}'~~ \int_a^b f^{(n-1)}(t) V'(t)dt=0. 
\end{align*}
Consequently, since $C_0^\infty (a,b)\subs{\mathcal{M}}'$, by Lemma 2.3, we will have $f^{(n-1)}=const$. a.e. in $[a,b]$ then $f$ is a polynomial of degree $n$ almost everywhere in $[a,b]$.
\end{proof}

The following regularity theorem is our major goal in this section.

\begin{theorem}
Suppose $n\geq 1$, $N,N'\subs\{0,1,2,\dots,n\}$,
and ${\mathcal{M}}_{N,N'}=\{u\in C^n[a,b]: 
u^{(i)}(a)=u_i, u^{(j)}(b)=w_j \; where \; i\in N, j\in N'\}$.
Define the functional $F$ on ${\mathcal{M}}_{N,N'}$ by 
$$F(u)=\int_a^b f(t,u(t),u'(t),\dots, u^{(n)}(t))dt$$
where $f=f(x_1,\dots,x_{n+2})$ is a function defined on $[a,b]\ti \mathbb{R}^n$ with 
continuous second partial derivatives with respect to all its variables. Let 
$u_0\in{\mathcal{M}}_{N,N'}$ be a local extremum of $F$ with respect to ${\mathcal{M}}_{N,N'}$, 
and let $t_0\in (a,b)$ be such that 
$$\f{\pa^2 f}{\pa x^2_{n+2}} (t_0,u_0(t_0),u'_0(t_0),\dots,u_0^{(n)}(t_0))\neq 
0.$$
Then there exists $\del>0$ such that $u_0\in C^{n+1}(t_0-\del,t_0+\del)$. 
\end{theorem}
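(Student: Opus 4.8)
The plan is to run the classical Hilbert--Weierstrass argument adapted to order $n$: extract an Euler--Lagrange integral identity from the criticality of $u_0$, convert it (by integrating by parts in the ``antiderivative'' direction and invoking Lemma~2.4) into the statement that $\pa f/\pa x_{n+2}$, evaluated along $u_0$, is a $C^1$ function of $t$ on $(a,b)$, and then solve the resulting relation for $u_0^{(n)}$ in a neighbourhood of $t_0$ by the implicit function theorem, the hypothesis $\pa^2 f/\pa x_{n+2}^2(t_0,\dots)\neq 0$ being exactly what makes that inversion possible. For the first step, fix $V\in C_0^\infty(a,b)$; all derivatives of $V$ vanish near $a$ and $b$, so $u_0+\eps V\in{\cM}_{N,N'}$ for every $\eps$ and every choice of $N,N'$, and hence $\eps\mapsto F(u_0+\eps V)$ has a local extremum at $\eps=0$. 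Differentiating under the integral sign (legitimate since $f\in C^2$ and all arguments stay in a compact set) and using Lemma~2.1 for the pointwise derivative of the integrand, I obtain
$$\int_a^b\sum_{k=0}^n f_k(t)\,V^{(k)}(t)\,dt=0\qquad\text{for all }V\in C_0^\infty(a,b),$$
where $f_k(t):=\dfrac{\pa f}{\pa x_{k+2}}\bigl(t,u_0(t),u_0'(t),\dots,u_0^{(n)}(t)\bigr)$; each $f_k$ is continuous on $[a,b]$ because $u_0\in C^n$ and $f\in C^2$.

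I then lower the order of the derivative carried by $V$. As the coefficients $f_k$ are a priori only continuous, I integrate by parts so as to \emph{antidifferentiate} them: for $k<n$, if $\Phi_k$ denotes an $(n-k)$-fold iterated antiderivative of $f_k$ (so $\Phi_k\in C^{n-k}\subs C^1$), then $n-k$ integrations by parts give $\int_a^b f_k V^{(k)}=(-1)^{n-k}\int_a^b\Phi_k V^{(n)}$, with no boundary terms since $V\in C_0^\infty$. Setting $g:=f_n+\sum_{k=0}^{n-1}(-1)^{n-k}\Phi_k$, a continuous function, this turns the identity into
$$\int_a^b g(t)\,V^{(n)}(t)\,dt=0\qquad\text{for all }V\in C_0^\infty(a,b),$$
and the reasoning of Lemma~2.4 (which is already reduced to test functions in $C_0^\infty(a,b)$) shows that $g$ is a polynomial on $(a,b)$. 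Consequently $f_n=g-\sum_{k=0}^{n-1}(-1)^{n-k}\Phi_k$ is a sum of a polynomial and functions in $C^1$, so $f_n\in C^1(a,b)$.

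Finally I invert near $t_0$. Put $s_0:=u_0^{(n)}(t_0)$ and define
$$\var(t,s):=\frac{\pa f}{\pa x_{n+2}}\bigl(t,u_0(t),u_0'(t),\dots,u_0^{(n-1)}(t),s\bigr),\qquad \psi(t,s):=\var(t,s)-f_n(t).$$
Here $\var\in C^1$ near $(t_0,s_0)$, because $\pa\var/\pa s=\pa^2 f/\pa x_{n+2}^2$ evaluated along $u_0$ is continuous and $\pa\var/\pa t$ is a finite sum of continuous second partials of $f$ times derivatives of $u_0$ up to order $n$ (all continuous since $u_0\in C^n$); combined with $f_n\in C^1$ from the previous step, $\psi\in C^1$ near $(t_0,s_0)$. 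Moreover $\psi(t_0,s_0)=f_n(t_0)-f_n(t_0)=0$ and $\pa\psi/\pa s(t_0,s_0)=\pa^2 f/\pa x_{n+2}^2(t_0,u_0(t_0),\dots,u_0^{(n)}(t_0))\neq 0$ by hypothesis. The implicit function theorem [6] (Lemma~2.2 already produces a continuous local solution, and a one-line mean value estimate upgrades it to $C^1$ since $\psi\in C^1$) then yields $\del>0$ and a $C^1$ function $z$ on $(t_0-\del,t_0+\del)$ with $z(t_0)=s_0$, $\psi(t,z(t))=0$, and $z(t)$ the unique zero of $\psi(t,\cdot)$ near $s_0$. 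But $u_0^{(n)}$ is continuous, $u_0^{(n)}(t_0)=s_0$, and $\psi(t,u_0^{(n)}(t))\equiv 0$ by the definition of $f_n$; uniqueness forces $u_0^{(n)}\equiv z$ near $t_0$, so $u_0^{(n)}\in C^1(t_0-\del,t_0+\del)$, i.e.\ $u_0\in C^{n+1}(t_0-\del,t_0+\del)$.

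The principal obstacle is the middle step: the derivative must be shifted off $V$ by repeatedly \emph{antidifferentiating} the continuous coefficients $f_k$ --- differentiating them is not permitted --- so that the generalized du Bois--Reymond Lemma~2.4 becomes applicable, and one must verify that the resulting iterated antiderivatives $\Phi_k$ are smooth enough ($\Phi_k\in C^{n-k}$, hence in $C^1$) that $f_n$ genuinely lands in $C^1$; that $C^1$ regularity of $f_n$ is exactly what the implicit function theorem consumes in the last step. Everything else is routine: testing against $C_0^\infty(a,b)$ disposes of all boundary terms and makes the argument uniform in $N$ and $N'$.
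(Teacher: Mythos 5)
Your proof is correct and follows essentially the same route as the paper's: compute the first variation, shift all derivatives onto $V$ by antidifferentiating the coefficients (your $\Phi_k$ are the paper's $h_{m,n-m+2}$ with $m=n-k$), invoke the generalized du Bois--Reymond lemma to get a polynomial identity, and finish with the implicit function theorem at $t_0$ using $\pa^2 f/\pa x_{n+2}^2\neq 0$. The only deviation is a harmless simplification: you test with $V\in C_0^\infty(a,b)$ instead of the full class ${\cM}$, which eliminates the boundary terms and the constant $c$ (so only the core of Lemma 2.4 is needed), and your $\psi(t,s)=\var(t,s)-f_n(t)$ is exactly the paper's $\var(t,s)$ rewritten via the Euler identity, with the role of criticality (upgrading $f_n$ from $C^0$ to $C^1$) made explicit.
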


\begin{proof}
Let $V\in{\mathcal{M}}_{N,N'}$. Then by Lemma 2.1,
\begin{align*}
&\del F(u_0;V) =\lim_{r\rig 0} \f{F(u_0+rV)-F(u_0)}{r}\\
&=\int_a^b \lim_{r\rig 0} \f{f(t,u_0(t)+rV(t), 
u'_0(t)+rV'(t),\dots,u_0^{(n)}(t)+rV^{(n)}(t)) - 
f(t,u_0(t),\dots,u_0^{(n)}(t))}{r} dt \\
&=\int_a^b \sum_{i=2}^{n+2} \f{\pa f}{\pa x_i} (t,u_0(t),\dots, u_0^{(n)}(t)) 
V^{(i-2)} (t)dt.
\end{align*}
Therefore, by Euler Necessary Condition,  
$$\del F( u_0; V)=0 \quad \text{for}\quad V\in{\mathcal{M}}_{N,N'}.$$
Consequently, 
$$\sum_{i=2}^{n+2} \int_a^b \f{\pa f}{\pa x_i} (t,u_0(t),\dots, 
u_0^{(n)}(t))V^{(i-2)}(t)dt=0\; ~~for\; ~~V\in{\mathcal{M}}_{N,N'}.$$
Define $h_{0,j}(t):=\f{\pa f}{\pa x_j}(t,u_0(t), \dots, u_0^{(n)}(t))$ 
for $2\leq j\leq n+2$ and $h_{k,j}$ iteratively as follows:
$$h_{k,j} (t):= \int_a^t h_{k-1,j}(\xi) d\xi \quad for \quad k\geq 1, 2\leq 
j\leq n+2.$$
Integrating by parts iteratively implies that
\begin{multline*}
\int_a^b \f{\pa f}{\pa x_{n-m+2}} (t,u_0(t), \dots, u_0^{(n)}(t)) V^{(n-m)} 
(t)dt =\sum_{j=1}^m (-1)^{j+1} h_{j,n-m+2} (b) V^{(j+n-m-1)}(b) \\
+(-1)^m \int_a^b h_{m,n-m+2}(t) V^{(n)}(t) dt \quad for \quad 1\leq m\leq n.
\end{multline*}
Consequently, Since 
${\mathcal{M}}:={\mathcal{M}}_{\{0,1,\dots,n\},\{0,1,\dots,n\}}
\subs {\mathcal{M}}_{N,N'}$, for any $V\in{\mathcal{M}}$ 
\begin{align*}
\int_a^b h_{0,n+2} (t) V^{(n)}(t)dt 
+\sum_{m=1}^n \Biggl( \sum_{j=1}^m& (-1)^{j+1} h_{j,n-m+2}(b) V^{(j+n-m-1)}(b)\\
&+(-1)^m \int_a^b h_{m,n-m+2}(t) V^{(n)}(t)dt \Biggr) =0
\end{align*}
thus,
$$\sum_{m=1}^n \left( \sum_{j=1}^m (-1)^{j+1} h_{j,n-m+2}(b) V^{(j+n-m-1)}(b) 
\right) +\sum_{m=0}^n \left( (-1)^m \int_a^b h_{m,n-m+2}(t) V^{(n)}(t)dt 
\right) =0.$$
Let
$c:=-\sum_{m=1}^n \left( \sum_{j=1}^m (-1)^{j+1} h_{j,n-m+2} (b) 
V^{(j+n-m-1)}(b) \right)$. Hence, we have
$$\forall V\in {\mathcal{M}} \int_a^b \left( \sum_{m=0}^n (-1)^m 
h_{m,n-m+2}(t)\right) V^{(n)}(t) dt=c.$$
By Lemma 2.4 we 
conclude there are $c_0,c_1, \dots,c_n \in \mathbb{R}$ such that: 
$$\sum_{m=0}^n (-1)^m h_{m,n-m+2} (t)=c_nt^n+\dots+c_1t+c_0\quad \text{ a.e.\; in} \quad [a,b].$$
Since $u_0\in C^n[a,b]$, $u_0^{(n)}$ is continuous and consequently 
$$\f{\pa f}{\pa x_{n+2}}(t,u_0(t), \dots, u_0^{(n)}(t)) +\sum_{m=1}^n (-1)^m 
h_{m,n-m+2}(t)=c_nt^n+\dots+c_1t+c_0$$
for all $t\in [a,b]$. For $t\in [a,b]$ and $s\in \mathbb{R}$ define a function $\var$ 
by 
$$\var(t,s)=\f{\pa f}{\pa x_{n+2}} (t,u_0(t), \dots, u_0^{(n-1)}(t),s) + 
\sum_{m=1}^n (-1)^m h_{m,n-m+2}(t)-c_nt^n+\dots-c_1t-c_0.$$
Then

(i) $\var(t_0,u_0^{(n)}(t_0))=0.$

(ii) $\f{\pa\varphi}{\pa s}$ and $\f{\pa \var}{\pa t}$ exist and continuous.

(iii) $\f{\pa\var}{\pa s} (t_0,u_0^{(n)}(t_0))=\f{\pa^2f}{\pa x^2_{n+2}} 
(t_0,u_0(t_0),\dots, u_0^{(n-1)}(t_0) , u_0^{(n)}(t_0))\neq 0$.
\\
Therefore, Lemma 2.2 implies that 
\begin{multline*}
\exists \del_1,\del_2>0; \forall t\in (t_0-\del_1,t_0+\del_1)~ \exists ! 
z(t)\in (u_0^{(n)}(t_0)-\del_2, u_0^{(n)}(t_0)+\del_2);\\
\var(t,z(t))=0 \& z\in C^1(t_0-\del_1,t_0+\del_1)\& z(t_0)=u_0^{(n)}(t_0).
\end{multline*}
On the other hand, the continuity of $u_0^{(n)}$ implies that there is $\del_0 > 0$ such that
$$\forall t\in (t_0-\del_0,t_0+\del_0);~ 
u_0^{(n)}(t)\in (u_0^{(n)}(t_0)-\del_2, u_0^{(n)}(t_0)+\del_2).$$
Therefore, if $\del := \text{min}\lbrace \del_0, \del_1\rbrace$, for every $t\in (t_0-\del,t_0+\del)$
$$u_0^{(n)}(t)\in (u_0^{(n)}(t_0)-\del_2, u_0^{(n)}(t_0)+\del_2)$$
and
$$\exists ! 
z(t)\in (u_0^{(n)}(t_0)-\del_2, u_0^{(n)}(t_0)+\del_2);~
\var(t,z(t))=0.$$
Then, since $\var(t,u_0^{(n)}(t))=0$ for $t \in (a, b)$, we will have 
$$u_0^{(n)}(t)=z(t) \quad for \quad t\in (t_0-\del,t_0+\del)$$
and consequently , since $z\in C^1(t_0-\del_1,t_0+\del_1)$,
$$u_0^{(n)} \in C^1(t_0-\del,t_0+\del),$$
so
$$u_0\in C^{n+1} (t_0-\del,t_0+\del).$$
\end{proof}

\section{Regularity of the weak solution}
The following Lemmas are necessary for providing proof of Theorem 3.4.

\begin{lemma}
Let $\Omega$ be an open set in $\mathbb{R}$. Suppose $f:\Omega \times \mathbb{R}^n \rightarrow \mathbb{R}$ have the following properties:

(i) for all $(y_1, \dots, y_n) \in \mathbb{R}^n$ the function $x \mapsto f(x, y_1, \dots, y_n)$ is measurable on $\Omega$;

(ii) for a.a. $x \in \Omega$ the function $(y_1, \dots, y_n) \mapsto f(x, y_1, \dots, y_n)$ is continuous on $\mathbb{R}$.\\
If $\varphi_i: \Omega \rightarrow \mathbb{R}$ for $i = 1, \dots, n$ are (Lebesgue) measurable on $\Omega$, then
$$x \longmapsto f(x, \varphi_1(x), \dots, \varphi_n(x))$$
is a measurable function on $\Omega$.
\end{lemma}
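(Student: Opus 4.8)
The plan is to use the standard two-step reduction for Carathéodory functions: first establish the conclusion when each $\varphi_i$ is a simple (finitely valued) measurable function, then pass to the general case by pointwise approximation together with the continuity hypothesis (ii). All measurability is understood with respect to Lebesgue measure on $\Omega$.

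First I would treat the case where every $\varphi_i$ is simple. Choosing a common refinement, there is a countable measurable partition $\{E_j\}$ of $\Omega$ on each piece of which all of $\varphi_1,\dots,\varphi_n$ are simultaneously constant, with value $(c_1^j,\dots,c_n^j)$ on $E_j$. On $E_j$ the composite function $x\mapsto f(x,\varphi_1(x),\dots,\varphi_n(x))$ coincides with the restriction to $E_j$ of $x\mapsto f(x,c_1^j,\dots,c_n^j)$, which is measurable on $\Omega$ by hypothesis (i). Hence for every Borel set $B\subseteq R$,
$$\{x\in\Omega: f(x,\varphi_1(x),\dots,\varphi_n(x))\in B\}=\bigcup_j\bigl(E_j\cap\{x\in\Omega: f(x,c_1^j,\dots,c_n^j)\in B\}\bigr),$$
a countable union of measurable sets, so $x\mapsto f(x,\varphi_1(x),\dots,\varphi_n(x))$ is measurable.

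Next, for general measurable $\varphi_i$, I would pick simple measurable functions $\varphi_i^{(k)}:\Omega\to R$ with $\varphi_i^{(k)}(x)\to\varphi_i(x)$ for every $x\in\Omega$, which is possible for any real-valued measurable function. Let $N\subseteq\Omega$ be the null set off which (ii) holds. For $x\in\Omega\setminus N$, continuity of $(y_1,\dots,y_n)\mapsto f(x,y_1,\dots,y_n)$ gives
$$f(x,\varphi_1^{(k)}(x),\dots,\varphi_n^{(k)}(x))\;\longrightarrow\;f(x,\varphi_1(x),\dots,\varphi_n(x))\qquad(k\to\infty).$$
By the first step each $g_k(x):=f(x,\varphi_1^{(k)}(x),\dots,\varphi_n^{(k)}(x))$ is measurable on $\Omega$, hence $g(x):=\limsup_{k}g_k(x)$ is measurable on $\Omega$, and $g$ agrees with $x\mapsto f(x,\varphi_1(x),\dots,\varphi_n(x))$ on $\Omega\setminus N$. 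Since Lebesgue measure is complete, redefining $g$ on the null set $N$ preserves measurability, so $x\mapsto f(x,\varphi_1(x),\dots,\varphi_n(x))$ is measurable on $\Omega$.

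The only points requiring care are bookkeeping ones: selecting a common refining partition so that all $\varphi_i$ are simultaneously simple on each piece in the first step, and invoking completeness of Lebesgue measure to accommodate the almost-everywhere (rather than everywhere) convergence that comes out of hypothesis (ii). There is no substantial analytic obstacle, and I would expect the write-up to be brief.
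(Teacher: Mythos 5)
Your proof is correct. Note that the paper itself gives no proof of this lemma: it is stated without argument as a standard fact (the classical measurability result for Carath\'eodory-type superpositions, available in the cited reference of Dr\'abek and Milota). Your two-step argument --- first handling simple $\varphi_i$ via a common finite measurable partition on which they are constant and invoking hypothesis (i), then approximating general measurable $\varphi_i$ pointwise everywhere by simple functions and using the continuity hypothesis (ii) together with measurability of $\limsup_k g_k$ and completeness of Lebesgue measure to absorb the exceptional null set --- is precisely the standard textbook proof, and you have handled the only delicate points (the simultaneous refinement and the almost-everywhere nature of (ii)) correctly.
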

\begin{proof}
See Remark 3.2.25 in [7].
\end{proof}
\begin{lemma}
If $g:=g(t,s)$ is a function from $[a,b]\ti \mathbb{R}$ to $\mathbb{R}$ such that 

(i) $\forall t\in [a,b] \; \exists !s(t)\in \mathbb{R}$; $g(t,s(t))=0$.

(ii) $\f{\pa g}{\pa s}>0$ on $[a,b]\ti \mathbb{R}$.

(iii) $g,\f{\pa g}{\pa s}$ are continuous on $[a,b]$.\\
Then, the function $t\lo s(t)$ is continuous on $[a,b]$. 
\end{lemma}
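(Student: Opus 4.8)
The plan is to prove continuity of $s$ at each point $t_0\in[a,b]$ separately, exploiting one structural consequence of the hypotheses: condition (ii) forces $s\mapsto g(t,s)$ to be \emph{strictly increasing} for every fixed $t$. Indeed $g(t,\cdot)$ is differentiable with $\f{\pa g}{\pa s}(t,s)>0$ for all $s$, so by the mean value theorem it is strictly increasing on $R$. Hence its zero $s(t)$, which exists and is unique by (i), is located precisely at the sign change of $g(t,\cdot)$: any $\sigma$ with $g(t,\sigma)<0$ satisfies $\sigma<s(t)$, and any $\sigma$ with $g(t,\sigma)>0$ satisfies $\sigma>s(t)$. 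This is the only property of $g$ I will use in the $s$-variable, and existence of the root is handed to us, so no appeal to the intermediate value theorem is needed.

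Next I would run the elementary $\eps$--$\del$ argument. Fix $t_0\in[a,b]$ and $\eps>0$. Since $g(t_0,s(t_0))=0$ and $g(t_0,\cdot)$ is strictly increasing, $g(t_0,s(t_0)-\eps)<0<g(t_0,s(t_0)+\eps)$. Now apply the continuity of $g$ in the variable $t$ (part of (iii)) at the two \emph{fixed} heights $s(t_0)\pm\eps$: there is $\del>0$ such that for every $t\in[a,b]$ with $|t-t_0|<\del$ one still has $g(t,s(t_0)-\eps)<0$ and $g(t,s(t_0)+\eps)>0$. For such $t$, comparing these inequalities with $g(t,s(t))=0$ and using strict monotonicity of $g(t,\cdot)$ yields $s(t_0)-\eps<s(t)<s(t_0)+\eps$, i.e. $|s(t)-s(t_0)|<\eps$. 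As $\eps>0$ was arbitrary, $s$ is continuous at $t_0$; as $t_0$ was arbitrary (with the neighbourhood read one-sidedly at $t_0=a$ and $t_0=b$), the map $t\lo s(t)$ is continuous on $[a,b]$.

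I do not expect a genuine obstacle here; the points needing a sentence rather than real work are just two bookkeeping items: the deduction ``strictly increasing from positive derivative,'' which is what lets the sign data on the two fixed levels trap the moving root, and the harmless intersection of $\{|t-t_0|<\del\}$ with $[a,b]$ at the endpoints. As an alternative one could instead apply Lemma 2.2 at each $t_0$ — legitimate since $g,\f{\pa g}{\pa s}$ are continuous at $t_0$ and $\f{\pa g}{\pa s}(t_0,s(t_0))>0\neq0$ — to produce a continuous local root $z$, and then invoke the global uniqueness in (i) to conclude $z=s$ on a neighbourhood of $t_0$, whence $s$ inherits continuity. I would present the direct argument above, since it is self-contained and makes transparent that continuity of $\f{\pa g}{\pa s}$ is not actually used.
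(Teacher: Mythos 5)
Your proof is correct, but it is a genuinely different route from the paper's. The paper does not give a self-contained argument at all: it obtains continuity of $t\lo s(t)$ on the open interval $(a,b)$ by invoking the local implicit-function statement (its Lemma 2.2, quoted from Remark 4.2.3 of [1]) together with the global uniqueness in (i), and then handles the endpoints $a,b$ by citing a second fact from [1] which is essentially the statement of the lemma itself (global root, $\f{\pa g}{\pa s}>0$, joint continuity of $g$ and $\f{\pa g}{\pa s}$ implies $s(\cdot)$ continuous on $[a,b]$); so the paper's ``proof'' is mostly a proof by citation, split into interior and boundary cases. Your direct $\eps$--$\del$ argument --- strict monotonicity of $g(t,\cdot)$ from $\f{\pa g}{\pa s}>0$ via the mean value theorem, sign persistence of $g(\cdot,s(t_0)\pm\eps)$ in $t$ by continuity, and trapping of the unique root --- treats interior points and endpoints uniformly, is self-contained, and in fact proves slightly more, since it never uses the continuity of $\f{\pa g}{\pa s}$ (only its positivity) and only needs continuity of $g$ in $t$ at fixed levels $s$. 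Your suggested alternative (Lemma 2.2 at each $t_0$ plus global uniqueness to identify the local branch $z$ with $s$) is precisely the paper's interior-point argument, so your main argument is the added value: it is more elementary and dispenses with the separate endpoint citation.
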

\begin{proof} 
We have the continuity on $(a,b)$ by the following fact which is second form of Implicit 
Function Theorem (see Remark 4.2.3 from [7]):
\paragraph{Fact:} If $\var:=\var(s,t)$ is a function from $[a,b]\ti \mathbb{R}$ to $\mathbb{R}$ 
such that 

(i) $\var(t_0,s_0)=0$.

(ii) $\f{\pa \var}{\pa s}(t_0,s_0)\neq 0$.

(iii) $\var,\f{\pa\var}{\pa s}$ are continuous in $t_0$.\\
Then,
$$\exists \del_1,\hat{\del}; \forall t\in (t_0-\del_1,t_0+\del_1) \exists ! 
z(t) \in (s_0-\hat{\del}, s_0+\hat{\del}); \var(t,z(t))=0,$$
the function $t\lo z(t)$ is continuous.\\
Moreover, it is continuous at the end points $a,~b$ by applying the following Fact (See Exercise 7.1.21 in [7]):\\
\\ 
Fact: {\textit {Let $g:[a, b] \times \mathbb{R} \rightarrow \mathbb{R}$ be a function and assume that for any $x \in [a, b]$ the
equation $g(x, z) = 0$ has a solution denoted by $z = z(x)$. If
$$\f{\pa g}{\pa z}(x,z)> 0~~~\text{on}~~[a,~b] \times \mathbb{R}$$
then this solution is unique. If, moreover, $g$ and $\f{\pa g}{\pa z}$ are continuous on
$[a, b] \times \mathbb{R}$, then $z = z(x)$ is continuous on $[a, b]$ as well.  }} 
\end{proof}

\begin{lemma}
Suppose $F$ be a functional from $W^{n,2}(a,b)$ to $\mathbb{R}$ and let $u_0\in 
C^n[a,b]$ be a local extremum of $F$. Then, $u_0$ is local extremum of 
$F|_{C^n[a,b]}$.
\end{lemma}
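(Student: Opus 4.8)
This lemma is a statement about comparison of topologies, and the plan is to exploit that the canonical inclusion $C^n[a,b]\hookrightarrow W^{n,2}(a,b)$ is continuous, so that any neighbourhood of $u_0$ in $W^{n,2}(a,b)$ contains a $C^n$-neighbourhood of $u_0$. First I would unwind the definitions: assume $u_0$ is a local minimum of $F$ (the maximum case is obtained by replacing $F$ with $-F$), so there is $\eps>0$ with $F(u_0)\le F(u)$ for every $u\in W^{n,2}(a,b)$ satisfying $\|u-u_0\|_{W^{n,2}}<\eps$.

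Next I would record the elementary norm estimate on the bounded interval $[a,b]$: for every $v\in C^n[a,b]$,
$$\|v\|_{W^{n,2}}^2=\sum_{k=0}^n\int_a^b|v^{(k)}(t)|^2\,dt\le (b-a)\sum_{k=0}^n\|v^{(k)}\|_\infty^2\le (n+1)(b-a)\,\|v\|_{C^n}^2,$$
so that $\|v\|_{W^{n,2}}\le C\,\|v\|_{C^n}$ with $C:=\sqrt{(n+1)(b-a)}$. Setting $\del:=\eps/C$, it follows that any $v\in C^n[a,b]$ with $\|v-u_0\|_{C^n}<\del$ lies in $W^{n,2}(a,b)$ and satisfies $\|v-u_0\|_{W^{n,2}}<\eps$.

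Finally I would assemble these: for every $v\in C^n[a,b]$ with $\|v-u_0\|_{C^n}<\del$ one has $F|_{C^n[a,b]}(v)=F(v)\ge F(u_0)=F|_{C^n[a,b]}(u_0)$, which is exactly the assertion that $u_0$ is a local extremum of $F|_{C^n[a,b]}$ with respect to the $C^n$-topology.

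There is no genuine difficulty here; the only point deserving a word is the norm comparison $\|\cdot\|_{W^{n,2}}\le C\,\|\cdot\|_{C^n}$ on $[a,b]$, which guarantees that a $C^n$-ball around $u_0$ sits inside the prescribed $W^{n,2}$-ball. (If one instead equips $C^n[a,b]$ with the subspace topology induced from $W^{n,2}(a,b)$, the claim is immediate, since the intersection of a $W^{n,2}$-open neighbourhood of $u_0$ with $C^n[a,b]$ is by definition open in that subspace topology.)
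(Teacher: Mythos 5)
Your proof is correct and takes essentially the same approach as the paper: both establish the norm comparison $\|v\|_{W^{n,2}}\le C\,\|v\|_{C^n}$ on $[a,b]$ and then deduce that a small $C^n$-ball around $u_0$ lies inside the given $W^{n,2}$-ball, so the extremum property passes to the restriction (the paper packages this last step as a general fact about continuously embedded normed subspaces, which you simply spell out). The slight difference in the constant ($\sqrt{(n+1)(b-a)}$ versus $\sqrt{b-a}$) only reflects a different but equivalent choice of norm on $W^{n,2}$ and is immaterial.
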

\begin{proof} 
For $u\in C^n[a,b]$, 
$$\|u^{(i)}\|_2^2 
= \int_a^b |u^{(i)}(x)|^2 dx \leq (b-a) \|u^{(i)}\|_\infty^2,$$
so
$$\|u^{(i)}\|_2 \leq \sqrt{b-a} \|u^{(i)}\|_\infty$$
and
$$\sum_{i=0}^n \|u^{(i)}\|_2 \leq \sqrt{b-a} \sum_{i=0}^n 
\|u^{(i)}\|_\infty.$$
Therefore,
$$\|u\|_{W^{n,2}(a,b)} \leq \sqrt{b-a} \|u\|_{C^n[a,b]}.$$
Now, by the following fact, proof is complete.
\paragraph{Fact:} Suppose $X,Y$ are normed spaces such that $Y\subs X$ and 
$\forall u \in Y$ $\|u\|_X <M\| u\|_Y$ for $M>0$.
Let $F$ be a functional from $X$ to $\mathbb{R}$ 
and $u_0\in Y$ be a local extremum of $F$. Then, $u_0$ is local extremum of 
$F|_Y$.
\end{proof}
The following regularity theorem is our major goal in this section.
\begin{theorem}
Suppose $n\geq 1$, $N,N'\subs
\{0,1,\dots,n\}$ and ${\mathcal{N}}_{N,N'}=\{u\in W^{n,2} (a,b): 
u^{(i)}(a)=u_i, u^{(j)}(b)=w_j\;\; \text{where}\;\; i\in N, j\in N'\}$.

Define the functional $F$ on ${\mathcal{N}}_{N,N'}$ by
\begin{align*} 
F(u)=\int_a^b f(t,u(t), u'(t),\dots,u^{(n)}(t))dt \tag{$3-1$}
\end{align*}
where $f=f(x_1,\dots,x_{n+2})$ is a function defined on $[a,b]\ti \mathbb{R}^n$ with 
continuous second partial derivatives with respect to all its variables. Let $h \in L_2(a, b)$, $c_1 \geq 0$ be
such that for a.a. $x_1 \in [a, b]$ and for all $(x_2, \dots, x_{n+2}) \in \mathbb{R}^{n+1}$,\\
\begin{align*}
&\vert f(x_1, x_2, \dots, x_{n+2}) \vert \leq h(x_1) + c_1(x^2_2 +\dots + x^2_{n+2}) \tag{$3-2$}\\
&\vert \f{\pa f}{\pa x_i}(x_1, x_2, \dots, x_{n+2}) \vert \leq h(x_1) + c_1(\vert x_2 \vert +\dots + \vert x_{n+2} \vert) ~\text{for}~ i \in \lbrace 2, \dots, n+2 \rbrace. \tag{$3-3$}
\end{align*}
Let $u_0\in{\mathcal{N}}_{N,N'}$ be a local extremum of $F$ with respect to ${\mathcal{N}}_{N,N'}$. 
For $t\in [a,b]$ and $s\in \mathbb{R}$ set 
$$\psi(t,s)=\f{\pa f}{\pa x_{n+2}}
(t,u_0(t),u'_0(t),\dots, u_0^{(n-1)}(t),s).$$
Assume that $\f{\pa\psi}{\pa s}>0$ on $[a,b]\ti \mathbb{R}$ and that for every fixed 
$t\in [a,b]$ the function $s\lo \psi(t,s)$ maps $\mathbb{R}$ onto $\mathbb{R}$. Then $u_0\in 
C^{n+1}[a,b]$.
\end{theorem}

\begin{proof}
First, it should be noticed that for $u \in \mathcal{N}_{N,N'}$ the function
$$t \longmapsto f(t,u(t),\dots, 
u^{(n)}(t))$$
is a measurable function on $(a, b) $ from Lemma 3.1. Moreover, for $u \in \mathcal{N}_{N,N'}$, by $(3-2)$ and the fact that $h, u, u^{\prime},\dots, u^{(n)} \in L^2$, we will have
\begin{align*}
\int_a^b f(t,u(t), u'(t),\dots,u^{(n)}(t))dt = &\int_a^b \vert f(t,u(t), u'(t),\dots,u^{(n)}(t)) \vert dt \\
\leq &\int_a^b h(t) dt + c_1 \left(\sum_{i=0}^{n} \int_a^b u^{(i)}(t) dt \right) \\
<&\infty.
\end{align*}
Then, for every $u \in \mathcal{N}_{N,N'},~F(u)<\infty$ and consequently $F$ is a well-defined function. Now, again, by Lemma 3.1, for every $i;~2\leq i \leq n+2$
$$t \longmapsto \f{\pa f}{\pa x_i} (t,u_0(t),\dots, 
u_0^{(n)}(t))$$
is a measurable functions on $(a, b)$. Moreover, by utilizing $(3-3)$ and Hölder's inequality, for every $v \in W$ we have
\begin{align*} 
\int_a^b \sum_{i=2}^{n+2} \f{\pa f}{\pa x_i} (t,u_0(t),\dots, u_0^{(n)}(t)) 
V^{(i-2)} (t)dt <\infty. \tag{$3-4$}
\end{align*}
Then, if we proceed literally as in the proof of Theorem 2.5 we arrive at the 
following equality which now holds for a.a. $t\in[a,b]$:
$$\f{\pa f}{\pa x_{n+2}} (t,u_0(t),\dots,u_0^{(n)}(t))+\sum_{j=1}^n (-1)^j 
h_{j,n+2-j}(t)-c_nt^n+\dots-c_1t-c_0=0.$$
We define function $g$ as follows: 
$$g(t,s)=\psi(t,s)+\sum_{j=1}^n (-1)^j h_{j,n+2-j}(t)-c_nt^n+\dots-c_1t-c_0.$$
For $\var_t(s):=\psi(t,s)$, we have
$$\var'_t(s)=\f{\pa \psi}{\pa s}(t,s) =\f{\pa^2f}{\pa x^2_{n+2}} 
(t,u_0(t),\dots, u_0^{(n-1)}(t),s)>0$$
therefore, $\var_t$ is one to one function. On the other hand by assumptions, 
$\var_t$ is surjective. Hence 
$$\forall t\in[a,b]\; ~\exists ! s(t)\in \mathbb{R};~ \var_t(s(t))=c_nt^n+\dots+c_1t+c_0-\sum_{j=1}^n (-1)^j 
h_{j,n+2-j}(t)$$
then
$$\forall t\in [a,b]\; ~\exists !s(t)\in \mathbb{R}; ~\psi(t,s(t))+\sum_{j=1}^n (-1)^j 
h_{j,n+2-j}(t) -c_nt^n+\dots-c_1t-c_0=0.$$
Consequently 
$$\forall t\in [a,b]\; ~\exists !s(t)\in \mathbb{R}; ~g(t,s(t))=0.$$
Then by
Lemma 3.2 the function $t\lo s(t)$ is continuous on $[a,b]$. On the other 
hand we have for every $t\in[a,b]$; $g(t,u_0^{(n)}(t))=0$. Therefore 
$$\forall t\in [a,b] \; u_0^{(n)}(t)=s(t)$$
and
$$u_0^{(n)} \;\; \text{is continuous}.$$
Hence $u_0\in C^n[a,b]$ and by Lemma 3.3 it is a local extremum of 
$F|_{C^n[a,b]}$. The assertion now follows from Theorem 2.5. 
\end{proof}

\begin{remark} 
It should also be mentioned that the growth conditions of (3-2) and (3-3) have been added to assumptions of Theorem 3.4 to guarantee the integrability of (3-1) and (3-4). Therefore, if in a problem we had this intention, the theorem can still be applied without these conditions be satisfied.     
\end{remark}

\begin{remark}
it should be taken into consideration that the differentiability condition of $f$ in Theorem 3.4 in some situations could be skipped. For instance, suppose, it is proved that that for every continuous $f$, the following differential equation have a weak solution: 
\begin{equation}
x^{\prime\prime}(t)=f(t,x(t));~~~t \in (0,1). \tag{$3-5$}
\end{equation}
However, $f$ is merely continuous and not differentiable, by utilizing Theorem 3.4 and in addition to considering that $\overline{C^{2}(X)}=C(X)$, it is again can be proved that the weak solution is in $C^2$. To illustrate, suppose $x_0$ is a weak solution of the differential equation $(3-5)$. For an arbitrary $n$, assume that $f_n$ is a function with differentiability conditions in Theorem 3.4 such that 
$$\Vert f_n - f \Vert_{\infty} < \f{1}{n}$$
Since $f_n$’s are continuous, the following equations 
$$x^{\prime\prime}(t)=f_n(t,x(t));~~~n \in \mathbb{N}$$ 
have weak solutions. On the other hand, since, now, $f_n$’s have the conditions of Theorem 3.4, then these solutions are in $C^2$; i.e. 
$$\exists x_n \in C^{2}(0, 1);~~ x_n^{\prime\prime}(t)=f_n(t,x_n(t)).$$ 
Now, we have
\begin{align*} 
\Vert x_n^{\prime\prime} - x_m^{\prime\prime} \Vert_{\infty} &= \text{Sup}_{t \in (0, 1)} \vert x_n^{\prime\prime}(t) - x_m^{\prime\prime}(t) \vert \\
&= \text{Sup}_{t \in (0, 1)} \vert f_n(t,x_n(t)) - f_m(t,x_m(t)) \vert\\
&\leqslant \Vert f_n - f_m \Vert_{\infty}.
\end{align*} 
So $\lbrace x_n^{\prime\prime} \rbrace_{n=1}^{\infty}$ is Cauchy in $C(0, 1)$. Therefore, 
$$\exists z \in C(0, 1);~~x_n^{\prime\prime}\rightarrow z~~\text{uniformly as}~~n \rightarrow \infty, $$ 
then 
$$\exists z \in C(0, 1);~~x_n^{\prime}\rightarrow \int_{0}^{t} z(s) ~ds~~\text{uniformly as}~~n \rightarrow \infty, $$ 
consequently 
\begin{equation}
\forall y \in C_{0}^{\infty}(0, 1)~~\int_{0}^{1} \left(x_n^{\prime}(t) - \int_{0}^{t} z(s) ds \right) y^{\prime}(t)dt~~\text{as}~~n \rightarrow \infty. \tag{$3-6$}
\end{equation}
On the other hand, since $x_\circ$ is a weak solution of the following equation 
$$x^{\prime\prime}(t)=f(t,x(t));~~t \in (0, 1)$$ 
we have 
\begin{equation}
\forall y \in C_{0}^{\infty}(0, 1)~~-\int_{0}^{1} x_\circ^{\prime}(t)  y^{\prime}(t)dt=\int_{0}^{1} f(t,x(t))  y(t)dt. \tag{$3-7$}
\end{equation} 
Moreover, as $x_n$ for every $n \in \mathbb{N}$ is a weak solution of the following equation: 
$$x^{\prime\prime}(t)=f_n(t,x(t));~~t \in (0, 1)$$ 
we have 
\begin{equation}
\forall y \in C_{0}^{\infty}(0, 1)~~-\int_{0}^{1} x_n^{\prime}(t)  y^{\prime}(t)dt=\int_{0}^{1} f_n(t,x(t))  y(t)dt. \tag{$3-8$}
\end{equation} 
Now, by $(3-7)$ and $(3-8)$ for every $y \in C_{0}^{\infty}(0, 1)$, 
\begin{align*}
\left \vert \int_{0}^{1} [x_{\circ}^{\prime}(t) - x_n^{\prime}(t)]  y^{\prime}(t)dt \right \vert&=\left \vert \int_{0}^{1} [ f(t,x_n(t)) - f_n(t,x_{\circ}(t)) ]  y(t)dt \right \vert\\
&\leqslant \Vert f_n - f \Vert_{\infty} \int_{0}^{1}  \vert y(t) \vert dt.
\end{align*} 
Consequently 
\begin{equation}
\forall y \in C_{0}^{\infty}(0, 1)~~\int_{0}^{1} \left(x_{\circ}^{\prime}(t) - x_n^{\prime}(t) \right) y^{\prime}(t)dt~~\text{as}~~n \rightarrow \infty. \tag{$3-9$}
\end{equation}
Then, by $(3-6)$ and $(3-9)$, it is concluded that 
$$\forall y \in C_{0}^{\infty}(0, 1)~~\int_{0}^{1} \left(x_{\circ}^{\prime}(t) - \int_{0}^{t} z(s) ds \right) y^{\prime}(t)dt=0$$
so, by Lemma 2.3, the following is resulted: 
$$x_{\circ}^{\prime}(t) = \int_{0}^{t} z(s) ds+c;~~z \in C(0, 1)$$ 
hence, 
$$x_\circ \in C^{2}(0, 1).$$ 
\end{remark}
\begin{example}
we will illustrate the application of Theorem 3.4 on the following Dirichlet boundary value problem 
\begin{equation*}
\begin{cases}
x^{(2n)}(t)+x^{\prime \prime}(t)+x^{3}(t)=f(t,x(t)), & t\in (0,1),\\
x(0)=x(1)=0,\\
\end{cases}\tag{$3-10$}
\end{equation*}
where $n \in \mathbb{N}$ and $f$  is a continuous function on $[0, 1] \times \mathbb{R}$. Put $H:=\{u\in W^{2n-1,2} (0,1): 
u^{(i)}(0)=0, u^{(j)}(1)=0\;\; \text{where}\;\; i\in N, j\in N'\}$ where $N=\lbrace 0 \rbrace $, $N^\prime=\lbrace 0\rbrace$. The functional
$$\psi(x):=\int_{0}^{1} \int_{0}^{x(t)} f(t,s) ds dt$$
defined on $H$ is of the class $C^1(H,\mathbb{R})$ and 
$$\psi^{\prime}(x)(h)=\int_{0}^{1}f(t,x(t)) h(t) dt, \quad x,h \in H.$$ 
Then 
$$F(x)=\int_{0}^{1} \left[\f {(-1)^n}{2}\vert x^{(n)}(t) \vert^{2}-\f {1}{2}\vert x^{\prime}(t) \vert^{2}+\f {1}{4}\vert x(t) \vert^{4}-\int_{0}^{x(t)} f(t,s) ds\right]dt$$ 
is of the class $C^1(H,\mathbb{R})$ and its critical points correspond to weak solutions of $(3-10)$. The regularity argument in Theorem 3.4 applied to $(3-10)$ implies that every weak solution is a classical solution in the sense that 
$$x \in C^{2n}_{0}[0,1]:=\{x\in C^{2n}[0,1]: x(0)= x(1)=0\}$$ 
and the equation in $(3-10)$ holds at every point $t$. Note that, in this example, the differentiability condition of $f$ was omitted based on Remark 3.6.    
\end{example}
\begin{example}
Many practical problems in applied sciences can be expressed as the following minimization problems (see [1, 3, 5, 6, 11], for example): 
\begin{equation*} 
\|x-x_\circ\|_{L_2(I)}^2+\lambda_1 \|x\|_{Y_1}+ \cdots +\lambda_n \|x\|_{Y_n}
\tag*{$(3-11)$}
\end{equation*}
where 
$$\|x-x_\circ\|_{L_2(I)}:=\Biggl(\int_I|x(t)-x_\circ(t)|^2dt\Biggr)^{\frac{1}{2}}$$ 
is the root-mean-square error (or more generally difference) between $x$ and $x_\circ$, and $\|x\|_{Y_i}$ for $i=1,\cdots, n$ are the norms of different smoothness spaces $Y_i$ for $i=1,\cdots,n$ respectively. $\lambda_i$ for $i=1,\cdots,n$ are parameters, if $\lambda_i$ is large, then necessarily $\|x\|_{Y_i}$ must be smaller at the minimum, i.e. $x$ must be smoother, while when $\lambda_i$ is small, $x$ can be rough, with $\|x\|_{Y_i}$ large. In the cases that $\|x\|_{Y_i}$ for $i=1,\cdots, n$ are in the following form
$$\|x\|_{Y_i}=\int_I f_i(t,x(t), x'(t),\dots,x^{(n)}(t))dt,$$ 
where $f_i=f_i(x_1,\dots,x_{n+2})$ is a function defined on $I \ti \mathbb{R}^n$ with 
continuous second partial derivatives with respect to all its variables, $(3-11)$ would be a problem in type of $(3-1)$. As a result, all papers that address discussions around the solutions of $(3-1)$, including this note, could possibly be important in investigating $(3-11)$. 
\end{example}
\section{Conclusion}
After three centuries, the study of the following problem 
$$F(x)=\int_a^b f(t,x(t),x'(t)) dt$$
and its variants still receive attention. Its applications are numerous in geometry and differential equations, in mechanics and physics, and in areas as diverse as engineering, medicine, economics, and renewable resources. However, in this paper, we discussed the generalization of this problem and focused on the regularity of its solutions in the hope that it would be useful to prove the regularity properties of problems’ solutions that arise from these disciplines. On the one hand, as we have already mentioned, in differential equations, addressing the notion of weak solution, a generalization of the notion of the classical solution, is beneficial because many nonlinear analysis methods are applicable to get a weak solution instead of a classical one. However, once we succeed in finding a weak solution, a inevitable question arises whether it has some better properties, e.g., the continuity of the first and second derivatives of the solution can be of interest. In fact, we generalized two theorems in the regularity theory which deals with these questions and is a very delicate issue in the theory of differential equations. On the other hand, optimal control problems with higher order are addressed more and more every year (see [19]; for example). Then, it is naturally more important to discuss the regularity properties of their solutions as well. Since the Hilbert-Weierstrass Theorem and Tonelli-Morrey Theorem are utilized in proving the regularity properties of optimal control problems (see chapter 23 in [4]), it is hoped that the availability of this article in hand can be an inspiration to prove the regularity of solutions to higher-order problems as well. Besides, it should be mentioned that some series of optimal control problems are equivalent to higher order variational problems (see [2]; for example). All in all, it is expected that this article would be of interest for all mathematicians who cherish Nonlinear Analysis and its history.


\end{document}